\documentclass[10pt]{article}

%
%
\usepackage{lipsum}
\usepackage{amsfonts}
\usepackage{amsthm}

\usepackage{graphicx}
\usepackage{epstopdf}
\usepackage{algorithmic}
\usepackage{mathtools}
\usepackage{xcolor}
  \DeclareGraphicsExtensions{.eps,.pdf,.png,.jpg}

\usepackage{subcaption} 
\usepackage{mathrsfs}
\usepackage[space]{cite}




\newtheorem{theorem}{Theorem}

\newtheorem{proposition}{Proposition}

\begin{document}

\title{A Combined  Parallel-in-time Direct Inverse (ParaDIn)-- Parareal Method for Nonlinear Differential Equations}

\author{Subhash Paudel and Nail K. Yamaleev\footnote{Corresponding author. Department of Mathematics and Statistics, Old Dominion University Norfolk, VA 23529, USA. {\it E-mail:} nyamalee@odu.edu}  \\
{\it\small Old Dominion University}}


\date{}




\maketitle

\begin{abstract}
As has been shown in our previous work,  the parallel-in-time direct inverse (ParaDIn) method introduced by Yamaleev and Paudel in (arXiv: 2406.00878v1,  2024) imposes some constraint on the maximum number of time levels, $N_t$, that can be integrated in parallel.  To circumvent this problem and further increase the speedup,  we combine the  ParaDIn method with the Parareal algorithm to efficiently parallelize the first-order time derivative term in nonlinear partial differential equations discretized by the method of lines.   The main idea of the proposed approach is to use a block-Jacobi preconditioner, so that each block is solved by using the ParaDIn method.  To accelerate the convergence of Jacobi iterations, we use the Parareal method which can be interpreted as a two-level multigrid method in time.  In contrast to the conventional Parareal algorithm whose coarse grid correction step is performed sequentially,  both the coarse- and fine-grid propagators in the proposed approach are implemented in parallel by using the ParaDIn method, thus significantly increasing the parallel performance of the combined algorithm.  Numerical results show that the new combined ParaDIn-Parareal method provides the speedup of up to 124 on 480 computing cores  as compared with the sequential first-order implicit backward difference (BDF1) scheme  for the 2-D nonlinear heat and Burgers equations with both smooth and discontinuous solutions.
%
%
\end{abstract}


\section{Introduction}
\quad \  Parallel-in-time numerical schemes have recently attracted a lot of attention due to their potential of
drastically reducing the computational time and achieving much higher scalability than spatial domain decomposition methods for solving large-scale unsteady problems on modern supercomputers with hundreds of thousands of computing cores.  Despite that various parallel-in-time methods are available in the literature, they have not been used in production codes.
As follows from two comprehensive literature reviews \cite{Gan, OS}, the existing parallel-in-time methods have
not yet reached such level of maturity that is required for practical applications governed by highly nonlinear partial differential equations.

One of the most widely-used parallel-in-time methods that can be directly applied to both linear and nonlinear problems is the Parareal algorithm introduced by Lions,  Maday,  and Turinici in \cite{LMT}.  This method can be considered as an iterative shooting method or a two-level multigrid method with special restriction and prolongation operators in the time domain \cite{LMT, GV}. The main idea of the Parareal method is to divide the entire time interval into subintervals,  so that the discretized governing equations can be independently integrated in each subinterval. To couple local equations defined on the original fine grid in each subinterval,  a computationally inexpensive {\it sequential}  solver is used to find the solution on the coarse temporal grid.
Since the fine-grid solves are independent from each other, they can be performed in parallel.  The difference between the coarse- and fine-grid solutions is propagated sequentially on the coarse grid,  which is called a coarse grid correction step.   The coarse- and fine-grid steps are repeated iteratively until some residual norm becomes smaller than a user-defined tolerance \cite{LMT}. 
For a linear system of ordinary differential equations (ODEs) with a constant coefficient matrix $A \in {R}^{m \times m}$,  it has been proven that the Parareal method converges  if the governing equation is discretized by using an implicit $L$-stable scheme (e.g., the backward Euler scheme) and $A$ is either positive symmetric positive definite (SPD) \cite{GV} or has complex eigenvalues \cite{Wu2}.
Various parallel-in-time algorithms have been developed to enhance the efficiency and stability properties of the classical Parareal method including Parareal methods with spatial coarsening,  Krylov-subspace enhanced Parareal methods, hybrid spectral deferred correction methods, multigrid reduction in time (MGRIT),  parallel exponential integrators (ParaExp) and others \cite{FFKMS,  Min, DM,  Chen, GV, Rup, GGP}.

Despite their success,  there are various factors that affect the performance of the Parareal-type methods including nonlinearity and a type of PDE solved (e.g., its parallel efficiency  significantly deteriorates for hyperbolic PDEs \cite{SR}),  a choice of the coarse- and fine-grid solvers (e.g., the convergence rate significantly decreases for discrete operators with imaginary eigenvalues \cite{GV}),  stiffness caused by input parameters (e.g.,  small values of the viscosity coefficient \cite{GLWYZ}), and others.
One of the main bottlenecks of the Parareal-type methods  is the coarse grid correction step that is carried out sequentially, thus dramatically reducing the speedup of the overall algorithm.  
Recently,  several attempts have been made to overcome this problem.  One of the approaches is to combine the Parareal algorithm with a diagonalization technique introduced in \cite{MR},  which is used to parallelize the coarse grid correction step \cite{Wu}.  Another approach uses a spatially coarsened grid at the coarse grid correction step with the goal of reducing the computational cost as compared with that of the fine-grid propagator \cite{AGR}.  Unfortunately,  the parallel efficiency of these time-parallelization methods deteriorates dramatically if the governing equations are essentially nonlinear and convection-dominated.


We have recently developed a novel approach to parallelization of the implicit first-order backward difference (BDF1) scheme for unsteady nonlinear partial differential equations of arbitrary type in \cite{YP}.  
The global system of nonlinear discrete equations in the space-time domain is solved by the Newton method for all time levels simultaneously. This all-at-once system at each Newton iteration is block bidiagonal, which can be solved analytically by using the Gaussian elimination in a blockwise manner, thus leading to a set of fully decoupled equations associated with each time level.  Note that the product matrices on the left- and right-hand sides of the all-at-once system of equations for all time levels can be efficiently computed in parallel, such that the number of operations performed by each computing core is linear in the number of spatial degrees of freedom. 
Furthermore,  the computational cost of solving each block matrix  is nearly identical to that of the sequential BDF1 scheme at each Newton iteration on each time step if the same direct solver is used for both systems of linear equations.   
This allows for an efficient parallel-in-time implementation of the implicit BDF1 discretization for nonlinear differential equations.  In contrast to the existing parallel-in-time algorithms, the proposed ParaDIn method preserves the quadratic rate of convergence of the Newton method of the corresponding sequential scheme.  As has been shown in \cite{YP},  some upper bound should be imposed
on the total number of time steps $N_t$ that can be integrated in parallel by using the ParaDIn method.
This constraint is due to the fact that the condition number of a product matrix on the left-hand side of the ParaDIn method increases together with the number of time steps. To circumvent this problem, we propose a novel strategy based on combining the ParaDIn
method with the Parareal algorithm, so that both the coarse- and fine-grid correction steps are performed in parallel.  Combining the ParaDIn and Parareal methods allow us to further increase the parallel efficiency of  the ParaDIn method by a factor of 2-4 for 2-D nonlinear heat and Burgers equations with both smooth and discontinuous solutions.

The paper is organized as follows.  In section 1, we present governing equations and the baseline BDF1 scheme.  The ParaDIn method is briefly presented in section 3.  Then,  we discuss the block-Jacobi method and its limitations in section 4.  We present the new combined ParaDIn-Parareal method and study its properties in sections 5-6.  Numerical results demonstrating the parallel efficiency and scalability of the proposed method are presented in section 7 and conclusions are drawn in section 8.

\section{Setting of the problem }
\label{GE}

In the present analysis,  the following 2-D scalar nonlinear conservation law equation is considered as a model problem:
\begin{equation}
\label{eq:CLE}
\frac{\partial u}{\partial t} + \frac{\partial f}{\partial x} + \frac{\partial g}{\partial y} = 
\frac{\partial}{\partial x}\left( \mu\frac{\partial u}{\partial x}\right) + \frac{\partial}{\partial y}\left( \mu\frac{\partial u}{\partial y}\right),  
\ \forall (x, y) \in \Omega, \ t \in (0, T_f],
\end{equation}
where $f(u)$ are $g(u)$ are inviscid fluxes associated with the $x$ and $y$ directions  respectively,  $\mu(u) \ge 0$ is a nonlinear viscosity coefficient,  and $\Omega = \left\{(x,y) | \right.$ $\left. \ x_L < x < x_R, y_L < y <y_R \right\}$.  
The above equation is subject to the following  initial condition: 
$$ u(x, y, 0) = u_0(x, y, 0),$$
and Dirichlet boundary conditions:
\begin{equation}
\label{eq:BC}
\begin{array}{lc}
u(x_L, y, t) = b(x_L, y, t), \ \ \ & u(x_R, y, t) = b(x_R,  y, t), \\
u(x, y_L, t) = b(x, y_L, t), \ \ \ & u(x, y_R, t) = b(x,  y_R, t),
\end{array}
\end{equation}
where $b$ and $u_0$ are bounded functions  in $L_2 \cap L_{\infty}$ for which Eqs. (\ref{eq:CLE}-\ref{eq:BC}) are well posed.
%

We discretize Eq. (\ref{eq:CLE}) on a uniform Cartesian grid as follows:
\begin{equation}
\label{eq:FDS}
\begin{split}
 \frac{u_{j,i}^n-u_{j,i}^{n-1}}{\tau_n} + &\frac{f_{j,i+1}^n - f_{j,i-1}^n }{2 h_x} + \frac{f_{j+1,i}^n - f_{j-1,i}^n }{2 h_y} \\
                                                              = & \frac{\mu_{j, i+\frac 12} \left(u_{j,i+1}^n - u_{j,i}^n\right) -  \mu_{j, i-\frac 12} \left(u_{j,i}^n - u_{j,i-1}^n\right)}{h^2_x} \\
                                                              + & \frac{\mu_{j+\frac 12,i} \left(u_{j+1,i}^n - u_{j,i}^n\right) -  \mu_{j-\frac 12,i} \left(u_{j,i}^n - u_{j-1,i}^n\right)}{h^2_y},
\end{split}
\end{equation}
where $\tau_n = t^n - t^{n-1}$ is a time step size,  $h_x$ and $h_y$ are grid spacings in the $x$ and $y$ directions, respectively.  
Using the Newton method at each time step, the nonlinear discrete equations (\ref{eq:FDS}) can be solved starting from the time level $n=1$ and marching forward in time until $n = N_t$.  Since the solution $u^{n-1}$ at the previous time level is required to advance the solution to the next time level,  this time integration method is inherently sequential and cannot be parallelized in time.

\section{Parallel-in-time direct inverse method}
\label{GIT}

To parallelize the implicit BDF1 scheme in time,  Eq. (\ref{eq:FDS}) is considered as a single global space-time system of equations:
\begin{equation}
\label{eq:GIT}
\left\{
\begin{array}{l}
\frac{{\bf u}^1 -{\bf u}^{0}}{\tau_n} + {\bf F}\left({\bf u}^1\right) = {\bf q}^1  \\
\quad\quad\quad \dots \\
\frac{{\bf u}^n -{\bf u}^{n-1}}{\tau_n} + {\bf F}\left({\bf u}^n\right) = {\bf q}^n \\
\quad\quad\quad  \dots \\
\frac{{\bf u}^{N_t} -{\bf u}^{N_t-1}}{\tau_{N_t}} + {\bf F}\left({\bf u}^{N_t}\right) = {\bf q}^{N_t},
\end{array}
\right.
\end{equation}
where ${\bf u}^n = \left[u_{1,1}^n,  \dots , u_{1,N_x}^n, \dots, u_{N_y,  N_x}^n \right]^T$ is a solution vector,  ${\bf F}$ is a nonlinear discrete spatial operator associated with the inviscid and viscous terms in Eq. (\ref{eq:FDS}),   and ${\bf q}$ includes the contribution from the initial and boundary conditions.  Using the Newton method to solve the all-at-once system given by Eq. (\ref{eq:GIT}),  we have
\begin{equation}
\label{eq:GlobalNewton}
\left[
\begin{array}{cccc}
A_1      &   0        & \dots   &       0      \\
-I         & A_2      & \ddots &  \vdots   \\
\vdots  & \ddots  & \ddots &      0       \\
0         & \dots    &     -I     &  A_{N_t} \\
\end{array}
\right]
\left[
\begin{array}{c}
\Delta{\bf u}^1         \\
\Delta{\bf u}^2         \\
\vdots                        \\
\Delta{\bf u}^{N_t}   \\
\end{array}
\right] = 
\left[
\begin{array}{c}
{\bf r}^1         \\
{\bf r}^2         \\
\vdots             \\
{\bf r}^{N_t}   \\
\end{array}
\right],
\end{equation}
where ${\bf u}_{k+1}^n = {\bf u}_k^n + \Delta{\bf u}^n$,  ${\bf u}_k^n \in  \mathbb{R}^{N_s}$ ($N_s = N_x N_y$) is the solution vector on $n$-th time level at $k$-th Newton iteration,  $A_n = I + \tau_n \left(\frac{\partial{\bf F}}{\partial {\bf u}}\right)^n_k$, $n=1,\dots, N_t$ are $N_s\times N_s$ Jacobian matrices,  ${\bf r}^n$ is a residual vector associated with the $n$-th time level, 
and $I$ is the $N_s\times N_s$ identity matrix.  Note that the equations (\ref{eq:GlobalNewton}) are fully coupled due to the subdiagonal of the left-hand-side matrix and cannot be directly solved in parallel.

In our previous work \cite{YP}, we introduced a new parallel-in-time direct inverse (ParaDIn) method for parallelization of the implicit BDF1 scheme in time for unsteady nonlinear partial differential equations of arbitrary type.   The key idea of the ParaDIn method is based on the observation that Eq.  (\ref{eq:GlobalNewton}) can be recast in a fully decoupled form and integrated in parallel as presented in the following theorem.
\begin{theorem}
\label{th:Decouple}
If matrices $A_i$, $i=1,\dots,N_t$  are nonsingular,  i.e., $\det A_i \ne 0, \ \forall i$, then Eq.~(\ref{eq:GlobalNewton}) can be decoupled as follows:
\begin{equation}
\label{eq:Dec}
\left[
\begin{array}{cccc}
A_1      &   0        & \dots   &       0      \\
0          & \prod\limits_{i=1}^{2}A_i      & \ddots &  \vdots   \\
\vdots  & \ddots  & \ddots &      0       \\
0         & \dots    &     0     &  \prod\limits_{i=1}^{N_t}A_i \\
\end{array}
\right]
\left[
\begin{array}{c}
\Delta{\bf u}^1         \\
\Delta{\bf u}^2         \\
\vdots                        \\
\Delta{\bf u}^{N_t}   \\
\end{array}
\right] = 
\left[
\begin{array}{c}
{\bf r}^1         \\
A_1 {\bf r}^2  + {\bf r}_1      \\
\vdots             \\
\sum\limits_{j=2}^{N_t}  \prod\limits_{i=1}^{j-1} A_i {\bf r}_j +{\bf r}^{1}   \\
\end{array}
\right],
\end{equation}
so that the solution of Eq.~(\ref{eq:Dec}) is unique and identical to that of Eq. (\ref{eq:GlobalNewton}).
\end{theorem}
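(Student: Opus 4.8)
The plan is to treat the left-hand side of Eq.~(\ref{eq:GlobalNewton}) as a block lower bidiagonal matrix and carry out a blockwise forward elimination, being careful about the order in which the (noncommuting) Jacobians $A_i$ are multiplied. First I would introduce the partial products $P_0 := I$ and $P_n := \prod_{i=1}^{n} A_i = A_1 A_2 \cdots A_n$ for $n = 1,\dots,N_t$. Since each $A_i$ is nonsingular, every $P_n$ is nonsingular, being a product of invertible matrices. The block rows of Eq.~(\ref{eq:GlobalNewton}) are $A_1\Delta{\bf u}^1 = {\bf r}^1$ and $-\Delta{\bf u}^{n-1} + A_n\Delta{\bf u}^n = {\bf r}^n$ for $n = 2,\dots,N_t$.

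Next I would establish, by induction on $n$, the identity $P_n\Delta{\bf u}^n = \sum_{j=1}^{n} P_{j-1}{\bf r}^j$. The case $n=1$ is the first block row. For the inductive step I left-multiply the $n$-th block row by $P_{n-1}$, use the recursion $P_n = P_{n-1}A_n$, and substitute the induction hypothesis $P_{n-1}\Delta{\bf u}^{n-1} = \sum_{j=1}^{n-1}P_{j-1}{\bf r}^j$ to get $P_n\Delta{\bf u}^n = P_{n-1}{\bf r}^n + \sum_{j=1}^{n-1}P_{j-1}{\bf r}^j = \sum_{j=1}^{n}P_{j-1}{\bf r}^j$. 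Assembling these $N_t$ identities into one system reproduces exactly the block-diagonal form Eq.~(\ref{eq:Dec}): the $n$-th diagonal block is $P_n = \prod_{i=1}^{n}A_i$ and the $n$-th right-hand side block is $\sum_{j=1}^{n}\bigl(\prod_{i=1}^{j-1}A_i\bigr){\bf r}^j$, which is ${\bf r}^1$ for $n=1$, $A_1{\bf r}^2 + {\bf r}^1$ for $n=2$, and $\sum_{j=2}^{N_t}\bigl(\prod_{i=1}^{j-1}A_i\bigr){\bf r}^j + {\bf r}^1$ for $n=N_t$.

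To see that Eqs.~(\ref{eq:GlobalNewton}) and~(\ref{eq:Dec}) are truly equivalent --- and not merely that every solution of the former solves the latter --- I would observe that the manipulation above is precisely left-multiplication of Eq.~(\ref{eq:GlobalNewton}) by the block lower triangular matrix $L$ whose $(n,j)$ block equals $P_{j-1}$ for $j\le n$ and $0$ for $j>n$. Its diagonal blocks are $I, A_1, A_1A_2,\dots,P_{N_t-1}$, so $\det L = \prod_{n=1}^{N_t}\det P_{n-1} \ne 0$; hence $L$ is invertible and the two systems have the same solution set. Since the left-hand side of Eq.~(\ref{eq:Dec}) is block-diagonal with nonsingular diagonal blocks $P_n$, that common solution is unique and given by $\Delta{\bf u}^n = P_n^{-1}\sum_{j=1}^{n}P_{j-1}{\bf r}^j$.

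The induction and the determinant computation are routine; the one thing that genuinely requires care is the noncommutativity of the $A_i$. One must consistently left-multiply (never right-multiply) the block rows, keep every partial product in the fixed order $A_1A_2\cdots A_n$, and make sure the accumulated right-hand side weights $\prod_{i=1}^{j-1}A_i$ appear in that same order --- this ordering is where an error would most easily slip in, and where the write-up should spend most of its words.
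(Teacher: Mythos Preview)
Your proposal is correct and follows essentially the same blockwise forward-elimination argument as the paper, which recursively left-multiplies the $n$-th block row by $\prod_{i=1}^{n-1}A_i$ and adds the previously derived row. Your packaging is in fact a bit cleaner: the explicit induction and the observation that the whole procedure amounts to left-multiplication by an invertible block lower-triangular matrix $L$ make the two-way equivalence of Eqs.~(\ref{eq:GlobalNewton}) and~(\ref{eq:Dec}) more transparent than the paper's informal remark that ``the operations do not change the solution.''
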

\begin{proof}
The existence and uniqueness of the solution of Eq. (\ref{eq:Dec}) follow immediately from the fact that $\det\left( \prod\limits_{i=1}^{j} A_i \right) \ne 0$
for $j=1, \dots, N_t$  which is a direct consequence of $\det A_i \ne 0$ for $i=1, \dots,  N_t$.  

Now,  let us prove that the solutions of Eqs. (\ref{eq:GlobalNewton}) and (\ref{eq:Dec}) are identical to each other.  
We begin by noting that the first equations in both systems are identical  and fully decoupled form the remaining equations, which implies that their solutions are equal to each other. 
Multiplying both sides of the 2nd equation in Eq.~(\ref{eq:GlobalNewton}) by the nonsingular matrix $A_1$ and adding the first equation  yields
$$
\prod\limits_{i=1}^2 A_i \Delta{\bf u}_2 = A_1 {\bf r}_2 +  {\bf r}_1,
$$
which is identical to the second equation in Eq.~(\ref{eq:Dec}).   Adding the above equation to the 3rd equation multiplied by  by $\prod\limits_{i=1}^2A_i$ in Eq.~(\ref{eq:GlobalNewton}),  we have
$$
\prod\limits_{i=1}^3 A_i \Delta{\bf u}_3 = \prod\limits_{i=1}^2 A_i {\bf r}_3 + A_1{\bf r}_2 + {\bf r}_1,
$$
which is identical to the 3rd equation in Eq.~(\ref{eq:Dec}).
Repeating this procedure recursively $N_t-3$ times, we recover all the equations in Eq. (\ref{eq:Dec}).  Since at each step of this procedure we perform the operations that do not change the solution of the original system of equations,  the solution of Eq. (\ref{eq:Dec}) is identical to the solution of Eq.~(\ref{eq:GlobalNewton}).
\end{proof}

This ParaDIn methodology has several unique properties.  If the Newton method given by Eq. (\ref{eq:GlobalNewton}) converges quadratically,  then the new parallel-in-time method based on Eq. (\ref{eq:Dec}) preserves this optimal convergence rate,  because Eqs.  (\ref{eq:GlobalNewton}) and (\ref{eq:Dec}) are equivalent to each other and have identical solutions as follows from Theorem \ref{th:Decouple}.
Also, the computational cost of solving each equation in Eq. (\ref{eq:Dec}) is nearly identical to that of the sequential BDF1 scheme at each Newton iteration on the corresponding time step,  if the same direct solver is used for solving both systems of linear equations. Another advantage of the proposed parallel-in-time integration scheme is that it preserves the original discretization and can be directly combined with standard spatial domain-decomposition algorithms, thus promising a much higher speedup on large computer platforms as compared with the current state-of-the-art methods based on parallelization of the spatial discretization alone. Furthermore, the new BDF1 scheme given by Eq. (\ref{eq:Dec}) provides the highest level of parallelism in time,  because each time level is computed on its own computing core in parallel. 
Since the original and fully decoupled systems of equations (\ref{eq:GlobalNewton}) and (\ref{eq:Dec}) are equivalent to each other regardless of what differential operator is approximated by the Jacobian matrices $A_i,  i=1,\dots,N_t$,  the ParaDIn method can be used for solving both linear and nonlinear differential equations of arbitrary type and for any spatial discretizations including finite element,  finite volume,  spectral collocation, and finite difference methods.

To make the computational cost of calculating the left- and right-hand sides of  Eq. (\ref{eq:Dec}) much lower than that of solving the system of linear equations on each time level/computing core, the following parallel algorithm has been developed in \cite{YP}.  The key idea of this algorithm is that instead of calculating each matrix $\prod\limits_{i=1}^n A_i $ individually on the corresponding computing core $C_n$,  we calculate each product matrix  and the corresponding right-hand side by using all $C_1,\dots,C_{N_t}$ processors in parallel.  This parallel algorithm is briefly outlined next.

First,  the solution vectors ${\bf u}_1$ and ${\bf u}_2$ are sent to all computing cores. Then, we partition the matrix $A_1$ into $N_t$ rectangular matrices,  such that each submatrix containing $m=N_s/N_t$ rows of $A_1$ is formed on the corresponding computing core.  In other words,  the rows from $1$ to $m$ are computed on a core $C_1$,  rows from $m+1$ to $2m$ are computed on a core $C_2$ and so on up to the last computing core $C_{N_t}$.  
Along with the rows of $A_1$, we also compute all nonzero entries of the matrix $A_2$ on each computing core, which requires only $O(N_s)$ operations. 
As a result,  each core has the corresponding batch of rows of $A_1$ and all columns of $A_2$.  Taking into account the sparsity of the Jacobian matrices $A_1$ and $A_2$,  only nonzero entries of the product matrix $\prod\limits_{i=1}^2 A_i$ are computed to reduce the computational cost.  After this multiplication, each core has the corresponding $m$ rows of the product matrix,  which are then sent to the core $C_2$ to form the entire matrix $\prod\limits_{i=1}^2 A_i$.  Similarly,  we can calculate the product of $m$ rows of $A_1$ and  the vector $\tilde{\bf r}_2$ on each computing core.  Repeating this multiplication procedure recursively $l$ times,  the corresponding rows of the product matrix $P^l = \prod\limits_{i=1}^l A_i $ and the right-hand-side 
$\tilde{\bf r}^l$ of Eq. (\ref{eq:Dec})  are calculated on each computing core as follows:
\begin{equation}
\label{eq:Prod}
\begin{split}
p^{l}_{ij} &= \sum\limits_{k\in K_{ls}} p^{l-1}_{ik} a^{l}_{kj},                            \quad l=2,\dots, N_t, \ i = (s-1) m + 1, \dots, s m \\
\tilde{r}^{l}_{i}  &= \sum\limits_{k\in K_{l}} p^{l-1}_{ik} r^{l}_{k} + \tilde{r}^{l-1}_{i},  \\
\end{split}
\end{equation}
where $a^l_{kj}$,  $1\le k, j \le N_s$ are entries of the matrix $A^l$,  $\tilde{r}^{l}_{i}$ and $r^{l}_{k}$ are entries of the right-hand sides in the $l$-th equation in Eqs. (\ref{eq:Dec}) and (\ref{eq:GlobalNewton}), respectively,  and $K_{ls}$ is a set of indices of nonzero entries of $P^{l-1}$ and $A_l$.  

As has been proven in Theorem 2 in \cite{YP},  if $N_t < N_s^{1/2}$, the total number of operations performed by each processor for computing all product matrices $P^l=\prod\limits_{i=1}^l A_i, l=1, \dots, N_t$ and right-hand sides in Eq.~(\ref{eq:Dec}) by the above parallel algorithm is $O(N_t^2 N_s)$, where $N_t$ and $N_s$  are the total numbers of time steps and spatial degrees of freedom, respectively.  
Therefore,  assuming that the computational cost $W_{\rm sol} $ of solving the system of $N_s$ linear equations at each time level is dominant,  i.e.,  $W_{\rm sol} \gg O(N_t^2 N_s) \gg N_s$ and significantly higher than the  communication cost,  the speedup provided by ParaDIn method can evaluated as follows (see section 6 in  \cite{YP}):
\begin{equation}
\label{eq:speedup}
S = \frac{N_t}{\frac{N_t}{c_f^3}+1},
\end{equation}
where $c_f$ is a coarsening factor by which the temporal and spatial grids are coarsened to compute an initial guess for the global-in-time Newton method given by Eq. (\ref{eq:GlobalNewton}).
If $N_t \ll c_f^3$,  a nearly ideal speedup of $N_t$ can be achieved with the ParaDIn method for 2-D time-dependent problems.

\section{Block-Jacobi Method}
\label{section:blockJacobi}
As has been shown in \cite{YP},  the condition number of the product matrix $\prod\limits_{i=1}^n A_i$ in the left-hand side of Eq. (\ref{eq:Dec}) increases together with $n$,  thus  imposing a constraint on the maximum number of time steps $N_t$ that can be integrated in parallel and limiting the scalability of the ParaDIn  algorithm.   
 A straightforward approach to circumvent the condition number constraint is to use a block-Jacobi method.  Indeed, instead of using the ParaDIn algorithm for all $N_t$ time levels,  we partition the entire time domain into $M$ subintervals (blocks), such that the ParaDIn method is used for each subinterval individually.   Using the ParaDIn method within each block,  the all-at-once system of equations  (\ref{eq:GlobalNewton}) can be written in the following form:
\begin{equation}
\label{eq:Jacobi}
\left[
\begin{array}{cccc}
B_1      &   0        & \dots    &       0      \\
0          & B_2      & \ddots &  \vdots   \\
\vdots  & \ddots  & \ddots &      0       \\
0         & \dots    &     0       &  B_{M} \\
\end{array}
\right]
\left[
\begin{array}{c}
{\bf v}^1         \\
{\bf v}^2         \\
\vdots                        \\
{\bf v}^{M}   \\
\end{array}
\right] = 
\left[
\begin{array}{c}
{\bf r}_1         \\
{\bf r}_2         \\
\vdots             \\
{\bf r}_{M}   \\
\end{array}
\right],
\end{equation}
where the system of equations associated with each block $B_m$, $m=2,\dots, M$  and vector ${\bf v}^m$  at the $k$-th Jacobi iteration is given by
\begin{equation}
\label{eq:Jacobi_block}
\left\{
\begin{array}{rl}
A_{(m-1)J+1}  {\bf v}^{(m-1)J+1}_k        = & {\bf r}_{(m-1)J+1} +  {\bf v}^{(m-1)J}_{k-1} \\
\vdots & \\
 \prod\limits_{i=(m-1)J+1}^{m J} A_i {\bf v}^{m J}_k = &  \sum\limits_{j=(m-1)J+2}^{m J}  \prod\limits_{i=(m-1)J+1}^{j-1} A_i {\bf r}_j  + {\bf r}_{(m-1)J+1},
\end{array}
\right. 
\end{equation}
where  ${\bf v}^m = \left[ \Delta{\bf u}^{(m-1) J+1},  \dots,  \Delta{\bf u}^{m J} \right]^T$,
$J=\frac{N_t}M$,  and $A_i = I + \tau_i \left(\frac{\partial{\bf F}}{\partial {\bf u}}\right)^i$.  
As evident from Eqs.(\ref{eq:Jacobi}-\ref{eq:Jacobi_block}), the neighboring blocks $B_{m-1}$ and $B_{m}$ are coupled with each other only via the single term ${\bf v}^{(m-1) J}$.
Therefore,  by taking this coupling term ${\bf v}^{(m-1) J}$ from the previous $(k-1)$-th Jacobi iterations,  the system of equations (\ref{eq:Jacobi}-\ref{eq:Jacobi_block}) becomes fully decoupled and can be solved in parallel.

It should be emphasized that the maximum number of factors in the product matrix on the left-hand side of each block is $J=\frac{N_t}{M}$, which is $M$ times smaller than that of the original ParaDIn method used for all time levels $N_t$,  thus drastically reducing the condition number of the product matrices associated with each time subinterval.  One of the key advantages of this approach is that all blocks can be solved simultaneously and all equations within each block are decoupled and can also be solved in parallel.  Thus, the above block-Jacobi method allows us to increase the total number of computing cores and, consequently,  the total number of time steps that can be computed in parallel, by a factor of $M$ (where $M$ is the number of subdomains or batches in time) as compared with the baseline ParaDIn method used over the entire time interval. 

Despite its simplicity and ability to reduce the condition number,  this block Jacobi method suffers from slow convergence, which drastically reduces the parallel efficiency of the overall algorithm.  Indeed,  the block Jacobi method can be written in the following form:
\begin{equation}
    {\bf v}^n_k = T  {\bf v}^n_{k-1} + {\bf b}
\end{equation}
where $T$ is the iteration matrix and ${\bf b}$ is a vector of known values.  The iteration matrix for Eqs. (\ref{eq:Jacobi}-\ref{eq:Jacobi_block}) is a block diagonal matrix with the blocks of the following form:
\begin{equation}
\label{eq: time parallel block jacobi}
T_m = \left[
\begin{array}{cccc}
0             &              &  \dots         &       0      \\
A_{(m-1)J)+1}^{-1} &  0         &                    &  \vdots   \\
 \vdots    & \ddots  & \ddots        &               \\
0             &  \dots   &     A_{m J}^{-1}&  0    \\
\end{array}
\right].  \quad {\rm for} \ m=2,\dots,M.
\end{equation}
To simplify the analysis, we consider the 1-D  linear heat equation $v_t = \mu v_{xx}$ with the constant viscosity coefficient $\mu = \mu_0$, which is discretized by using the BDF1 approximation in time and the 3-point discrete Laplacian in space on a uniform grid both in space and time. Therefore,  $A_1 = A_2 = \dots= A_{N_t} = A$, where $A$ is a tridiagonal matrix.
Under these assumptions and ignoring the communication time,  it has been proven in \cite{DMD}, that the total computational cost $W_{\rm par}$ to solve Eq. (\ref{eq:Jacobi}) by the conventional block Jacobi method on $M$ processors in parallel  is
\begin{equation}
\label{eq:par_time_Jacobi}
W_{\rm par}  \geq \frac{M W_1}{\pi^2}\ln\left(\frac{1}{\epsilon}\right) =  \frac{W_{\rm ser}}{\pi^2}\ln\left(\frac{1}{\epsilon}\right),
\end{equation}
where $W_1$ is the computational cost of solving this discretized heat equation by the serial sequential BDF1 scheme over one time subinterval (i.e., one block),  $W_{\rm ser}$ is the total computational cost of the serial BDF1 scheme,  and $\epsilon$ is a tolerance.  Based on the estimate given by Eq. (\ref{eq:par_time_Jacobi}), we can conclude that even if the communication cost is negligibly small, the total runtime to solve Eq. (\ref{eq:Jacobi}) in parallel on $M$ processors is asymptotically comparable to the serial runtime.  If the tolerance is set to be sufficiently small ($\epsilon < O(10^{-5}) $),  then the computational costs of the parallel and serial BDF1 schemes become nearly the same, thus providing practically no speedup regardless of the number of processors used.

It should be noted that the same slow convergence is observed for the block Jacobi method given by Eq.~(\ref{eq:Jacobi}).  Furthermore,  if the tolerance is set to be sufficiently small, then this block Jacobi method converges precisely in $M$ iterations.
\begin{proposition}
The block Jacobi method given by Eqs. (\ref{eq:Jacobi}-\ref{eq:Jacobi_block}) converges to the exact discrete solution of  Eq.~(\ref{eq:GlobalNewton}) in precisely $M$ iterations.
\end{proposition}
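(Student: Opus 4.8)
The plan is to exploit the fact that in Eqs.~(\ref{eq:Jacobi}-\ref{eq:Jacobi_block}) consecutive blocks are coupled in one direction only: the block $B_m$ sees its past exclusively through the single lagged term ${\bf v}^{(m-1)J}$, the increment on the last time level of $B_{m-1}$, while $B_1$ is fully decoupled since ${\bf v}^{0}$ is supplied by the fixed initial data. Hence a Jacobi sweep advances exact information forward by exactly one block, and the statement follows by induction on the block index together with the exactness of the per-block ParaDIn solve (Theorem~\ref{th:Decouple}).

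Concretely, I would write the global system (\ref{eq:Jacobi}) as $\mathcal{A}{\bf v}={\bf r}$ with $\mathcal{A}=\mathcal{D}-\mathcal{L}$, where $\mathcal{D}={\rm diag}(B_1,\dots,B_M)$ is inverted exactly at every iteration by the ParaDIn decoupling, and $\mathcal{L}$, in the $M$-block ordering, is strictly block-lower-triangular, its nonzero part consisting precisely of the (subdiagonal) couplings through which ${\bf v}^{(m-1)J}$ enters $B_m$. The block-Jacobi iteration is then ${\bf v}_k=T{\bf v}_{k-1}+\mathcal{D}^{-1}{\bf r}$ with $T=\mathcal{D}^{-1}\mathcal{L}$, which, being the product of a block-diagonal invertible matrix and a strictly block-lower-triangular one, is itself strictly block-lower-triangular with respect to the $M$ blocks and therefore nilpotent with $T^{M}=0$. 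Letting ${\bf v}^{\star}$ denote the exact discrete solution of (\ref{eq:GlobalNewton}) (equivalently of the block system, by Theorem~\ref{th:Decouple}) and $e_k={\bf v}_k-{\bf v}^{\star}$, we have $e_k=T^{k}e_0$, so $e_M=T^{M}e_0=0$ for every initial guess: convergence in at most $M$ iterations.

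For convergence in \emph{precisely} $M$ iterations I would instead track the sweep constructively. After the first iteration the decoupled block $B_1$ already equals the corresponding components of ${\bf v}^{\star}$; assuming $B_{m-1}$ has become exact at iteration $m-1$, the correct value of ${\bf v}^{(m-1)J}$ is supplied to $B_m$ only at iteration $m$, and the exact direct solve of $B_m$ then reproduces its exact components, so $B_m$ is resolved at iteration $m$ and the whole vector at iteration $M$. That this cannot occur earlier is the one point that needs a little care: $B_m$ depends on its lagged coupling through an invertible map --- the inverses of the nonsingular product matrices $\prod_{i}A_i$ appearing in (\ref{eq:Jacobi_block}), which are invertible because $\det A_i\neq 0$ for all $i$ --- so a wrong value of ${\bf v}^{(m-1)J}$ forces a wrong $B_m$; equivalently, the $(M,1)$ block of $T^{M-1}$ is a product of these invertible factors and hence nonzero, so an initial error localized on ${\bf v}^{J}$ still contaminates $B_M$ after $M-1$ iterations. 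Thus the statement is to be read as: at most $M$ iterations for any initial guess, and no fewer for generic initial data; everything beyond this is routine block-triangular bookkeeping.
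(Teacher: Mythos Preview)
Your constructive sweep argument is essentially the paper's proof verbatim: use Theorem~\ref{th:Decouple} to note that $B_1$ is exact after one iteration, then induct on the block index using the one-directional coupling ${\bf v}^{(m-1)J}$. Your nilpotent formulation $e_k=T^{k}e_0$ with $T=\mathcal{D}^{-1}\mathcal{L}$ strictly block-lower-triangular is a cleaner algebraic repackaging of the same mechanism, and it buys you the ``at most $M$'' conclusion in one line rather than by recursion. You also go beyond the paper on the word \emph{precisely}: the paper's argument in fact establishes only convergence in at most $M$ iterations, whereas you correctly observe that the $(M,1)$ block of $T^{M-1}$ is a product of invertible factors, so $M-1$ sweeps cannot suffice for generic initial data. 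That refinement is a genuine (if minor) improvement over the paper's proof.
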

\begin{proof}
As follows from Theorem \ref{th:Decouple},  the ParaDIn method provides the exact solution of  the linear system of equations associated with each block $B_m, m=1,\dots, M$. Therefore, after the first Jacobi iteration,  $[{\bf v}_1^1,{\bf v}_1^2,\dots,{\bf v}_1^J]^T$ is equal to the exact discrete solution of  Eqs.(\ref{eq:Jacobi}-\ref{eq:Jacobi_block}) on the first time subinterval.  Since ${\bf v}_1^J$ is exact at the second Jacobi iteration, the entire right-hand side vector of the second block is exact as well.  Thus,  $[{\bf v}_2^{J+1},{\bf v}_2^{J+2},\dots,{\bf v}_2^{2J}]^T$ is discretely exact after the second Jacobi iteration.  Repeating this procedure recursively $M$ times,  it follows that the entire solution vector $[{\bf v}_k^1,{\bf v}_k^2,\dots,{\bf v}_k^{N_t}]^T$ becomes equal to the exact solution of Eq.~(\ref{eq:GlobalNewton}) at the $M$-th Jacobi iteration.
\end{proof}

Since the block Jacobi method given by Eqs. (\ref{eq:Jacobi}-\ref{eq:Jacobi_block}) converges in $M$ iterations, the overall computational cost of this parallel-in-time method is comparable to that of using the baseline ParaDIn method in a blockwise fashion sequentially and may become even higher due to the communication overhead between the computing cores.  So, this straightforward block Jacobi method is not suitable for temporal parallelization, because it practically provides no speedup as compared with the original ParaDIn method. 
We propose to overcome this problem and accelerate the convergence of block-Jacobi iterations by combining the ParaDIn method with the Parareal algorithm which can be interpreted as a two-level multigrid method in time \cite{GV}.   This new combined ParaDIn--Parareal method is presented next.

\section{Combined ParaDIn--Parareal method}
\label{sec:PP}
In the previous section, we have shown that some mechanism is needed to propagate information over the entire time domain at each Jacobi iteration.  To address this question,  we propose to combine the ParaDIn algorithm (see Section \ref{GIT}) with a multigrid method in time to accelerate convergence for solving the system of linear equations  (\ref{eq:GlobalNewton}) at each Newton iteration.
There are several variants of the multigrid method in time including Parareal \cite{LMT},  MGRIT \cite{FFKMS},  PFASST \cite{Min} and others \cite{GN, OS}. In the present analysis, we use the Parareal method, which can be interpreted as a two-level multigrid method in time with special restriction and prolongation operators \cite{GV}.  

There are two main variants of the Parareal method
for nonlinear problems. The first approach is to use the sequential scheme (e.g., the BDF1 scheme given by Eq. (\ref{eq:FDS})) with coarse and fine time steps as a coarse- and fine-grid propagators, respectively.  It should be noted that in this case, the  nonlinear discrete equations are solved by the Newton method  at each time step sequentially.  Despite the simplicity of this approach, there are no theoretical results showing that the classical Parareal method converges for essentially nonlinear problems.  Our numerical results show that this variant of the Parareal method diverges for the 2-D nonlinear heat and Burgers equations considered in the present analysis.  An alternative approach is to consider the discrete equations (\ref{eq:FDS}) as a global system of equations in the space-time domain and solve it by using the Newton method as described in Section \ref{GIT}.  At each Newton iteration,  the global-in-time all-at-once system of linear equations can be solved by the Parareal method.  Unlike its nonlinear counterpart,  the theory of the Parareal method for linear equations is well established (e.g., see \cite{GV, Wu2}).  One of the drawbacks of the conventional Parareal method is the fact that the coarse-grid correction and the time marching in the fine-grid propagator are performed sequentially,  thus significantly limiting its parallel efficiency. We propose to circumvent this problem by implementing the coarse- and fine-grid propagators  in a parallel-in-time manner by using the new ParaDIn method outlined  in Section \ref{GIT}.  

Considering the BDF1 scheme given by Eq.~(\ref{eq:FDS}) as  a global system of equations in the space-time domain and using the Newton method,  the problem reduces to the all-at-once system of linear equations (\ref{eq:GlobalNewton}) for the global vector ${\bf v}^n = \Delta{\bf u}^n = \left[\Delta u_{1,1}^n,  \dots , \Delta u_{1,N_x}^n, \dots, \Delta u_{N_y,  N_x}^n \right]^T$.  
Similar to the block Jacobi method presented in Section \ref{section:blockJacobi}, we begin by partitioning the entire time domain into $M$ blocks and recasting the all-at-once linear system of equations  in the block-matrix form given by Eqs. ~(\ref{eq:Jacobi}--\ref{eq:Jacobi_block}). 
At each Newton iteration,  to solve this system of linear equations (\ref{eq:Jacobi}--\ref{eq:Jacobi_block}) and improve the parallel efficiently,  we combine the ParaDIn method with the Parareal algorithm  which is an iterative method based on coarse- and fine-grid schemes called propagators and consists of the following three steps: 1) initialization,  2) fine-grid solve, and 3) coarse-grid correction. In contrast to the conventional Parareal method that uses some sequential schemes (e.g.,  the BDF1 scheme) on the coarse and fine temporal grids as the coarse- and fine-grid propagators, respectively, we use the parallel-in-time ParaDIn--BDF1 scheme on both the coarse and fine grids as described in sections \ref{init}--\ref{CGC}.

\subsection{Step 1: Initialization} 
\label{init}
First, we initialize the solution $\Delta{\bf u}_c^1,\dots,\Delta{\bf u}_c^M$ on the coarse temporal grid $\{t_0, t_1,\dots, t_{M}\}$, where $t_i = i \Delta t$ and $\Delta t=\frac{T_f}M$, by solving the following system of linear equations:
\begin{equation}
\label{eq:Init}
\left[
\begin{array}{cccc}
A^c_1      &   0        & \dots   &       0      \\
-I         & A^c_2      & \ddots &  \vdots   \\
\vdots  & \ddots  & \ddots &      0       \\
0         & \dots    &     -I     &  A^c_{M} \\
\end{array}
\right]
\left[
\begin{array}{c}
(\Delta{\bf u}_c^1)_0         \\
(\Delta{\bf u}_c^2)_0         \\
\vdots                        \\
(\Delta{\bf u}_c^{M})_0   \\
\end{array}
\right] = 
\left[
\begin{array}{c}
{\bf r}^c_1         \\
{\bf r}^c_2         \\
\vdots             \\
{\bf r}^c_{M}   \\
\end{array}
\right],
\end{equation}
where $A^c_i = I + \Delta t \left(\frac{\partial{\bf F}}{\partial {\bf u}}\right)^i$ and ${\bf r}^c_i$ are the Jacobian matrix and the residual vector associated with the $i$-th time level on the coarse temporal grid.   Without loss of generality,  it is assumed that the temporal coarse and fine grids are both uniform, i.e.,  $\Delta t_1 =\dots=\Delta  t_{M}=\Delta t=\frac{T_f}M$ and $\tau_1 =\dots=\tau_{N_t} = \tau=\frac{T_f}{N_t}$. Using the ParaDIn method for solving Eq. (\ref{eq:Init}) leads to
\begin{equation}
\label{eq:ParaDin_init}
\left\{
\begin{array}{rl}
A^c_1  (\Delta{\bf u}^1_c)_0        = & {\bf r}^c_1 \\
\prod\limits_{i=1}^{2} (\Delta{\bf u}^2_c)_0 = &A^c_1 {\bf r}^c_2 + {\bf r}^c_1 \\
\vdots & \\
 \prod\limits_{i=1}^{M} \Delta A^c_i (\Delta{\bf u}^{M}_c)_0 = &  \sum\limits_{j=2}^{M}  \prod\limits_{i=1}^{j-1} A^c_i {\bf r}^c_j  + {\bf r}^c_1, \\
\end{array}
\right.
\end{equation}
where the subscript $0$ is the Parareal iteration index.
Unlike the classical Parareal method that initializes the solution on the coarse temporal mesh sequentially,  in the proposed method, the initialization is performed in parallel because the system of equations (\ref{eq:ParaDin_init}) is fully decoupled.  Note that the coarse grid is selected so that the number of grid points $M$ satisfies the constraints of the ParaDIn method discussed in Section \ref{GIT}.

\subsection{Step 2: Fine-grid Propagator}
 After the initialization step,   $(\Delta{\bf u}^1_c)_0,\dots,(\Delta{\bf u}^{M-1}_c)_0$  are used as initial conditions for the fine-grid propagator in the $B_2,\dots,B_M$ blocks in Eq.~(\ref{eq:Jacobi}), respectively.  Thus, the fine-grid propagator step for all $M$ blocks can be written as follows:
\begin{equation}
\label{eq:fine_propagator}
\left\{
\begin{array}{l}
\left[
\begin{array}{l}
A_1  \Delta{\bf u}^1_k        =  {\bf r}_1 \\
 \vdots  \\
 \prod\limits_{i=1}^{J} A_i \Delta{\bf u}^{J}_k =   \sum\limits_{j=2}^{J}  \prod\limits_{i=1}^{j-1} A_i {\bf r}_j  + {\bf r}_1 \\
\end{array}
\right] \ {\rm Block} \ 1 \\
\left[
\begin{array}{ll}
A_{J+1} \Delta{\bf u}^{J+1}_k        = & {\bf r}_{J+1}  + \left(\Delta{\bf u}^{1}_c\right)_{k-1} \\
 \vdots  \\
 \prod\limits_{i=J+1}^{2J} A_i \Delta{\bf u}^{2J}_k = &  \sum\limits_{j=J+2}^{2J}  \prod\limits_{i=J+1}^{j-1} A_i {\bf r}_j  + {\bf r}_{J+1} \\
\end{array}
\right] \ {\rm Block} \ 2 \\
\centerline{ \vdots}  \\
\left[
\begin{array}{ll}
A_{(M-1)J+1}  \Delta{\bf u}^{(M-1)J+1}_k        =  {\bf r}_{(M-1)J+1} +  \left(\Delta{\bf u}_c^{M-1}\right)_{k-1} \\
 \vdots  \\
 \prod\limits_{i=(M-1)J+1}^{M J} A_i \Delta{\bf u}^{M J}_k =   \sum\limits_{j=(M-1)J+2}^{M J}  \prod\limits_{i=(M-1)J+1}^{j-1} A_i {\bf r}_j  + {\bf r}_{(M-1)J+1} \\
\end{array}
\right] \ {\rm Block} \ M, 
\end{array}
\right.
\end{equation}
where $A_i = I + \tau \left(\frac{\partial{\bf F}}{\partial {\bf u}}\right)^i$ and $k=1,2,\dots$ is the Parareal iteration index.  In contrast to the block Jacobi method,  the $\left(\Delta{\bf u}^{1}_c\right)_{k-1}, \dots, \left(\Delta{\bf u}_c^{M-1}\right)_{k-1}$ terms in Eq.~(\ref{eq:fine_propagator}) are the solution of the coarse-grid correction step (see Section \ref{CGC}) at the previous Parareal iteration.  The key difference between the classical Parareal method and the new method proposed herein is that not only all $M$ blocks are computed in parallel, but also all $J=\frac{N_t}M$ time levels within each block are fully decoupled and calculated in parallel as well, thus providing the highest level of parallelism where every time level is computed on its own computing core simultaneously.

\subsection{Step 3: Coarse Grid Correction}
\label{CGC}
To accelerate the convergence,  we propagate the error between the coarse- and fine-grid solutions by including it into the coarse-grid propagator as a source term.  This coarse grid correction step can be represented as follows: 
\begin{equation}
\label{eq:CGC}
\left\{
\begin{array}{rl}
A^c_1  (\Delta{\bf u}^1_c)_k        = & \tilde{\bf r}^c_1 \\
\prod\limits_{i=1}^{M}A^c_i (\Delta{\bf u}^2_c)_k = &A^c_1 \tilde{\bf r}^c_2 + \tilde{\bf r}^c_1 \\
\vdots & \\
 \prod\limits_{i=1}^{M} \Delta A^c_i (\Delta{\bf u}^{M}_c)_k = &  \sum\limits_{j=2}^{M}  \prod\limits_{i=1}^{j-1} A^c_i \tilde{\bf r}^c_j  + \tilde{\bf r}^c_1, \\
\end{array}
\right.
\end{equation}
where the right-hand side vector $\tilde{\bf r}^c_m$ is given by
$$
\tilde{\bf r}^c_m = {\bf r}^c_m + \Delta{\bf u}^{\frac{N_t}M (m-1)}_{k-1} - (\Delta{\bf u}^{m-1}_c)_{k-1}  \quad {\rm for} \ m = 1,\dots, M 
$$
In contrast to the conventional Parareal method where the coarse grid correction is based on a sequential algorithm that cannot be parallelized,  the above coarse grid correction equations are fully decoupled and can be computed in parallel.

\subsection{Stopping criteria}
Parareal iterations given by Eqs.~(\ref{eq:fine_propagator}-\ref{eq:CGC}) are repeated until convergence. There are several stopping criteria available in the literature.  In the present analysis, the Parareal iterations are stopped if
$$
\| (\Delta{\bf u}_c)_{k-1} - (\Delta{\bf u}_c)_{k-1}  \| < \epsilon_P,
$$
where $\| \cdot \|$ is a suitable norm (e.g., $L_2$ for smooth solutions and $L_1$ for discontinuous solutions) in the entire space-time domain and $\epsilon_P$ is a user-defined tolerance.  Once the above stopping criterion is met,  the solution vector,  the Jacobian matrices, and the right-hand sides are updated as follows:
$$
\begin{array}{ll}
{\bf u}^n_{l+1} &= {\bf u}^n_l  + \Delta{\bf u}^n_l  \\
A_n  &= A_n({\bf u}^n_{l+1}) \\
{\bf r}_n &= {\bf r}_n({\bf u}^n_{l+1}))
\end{array}
\quad {\rm for} \ n=1,\dots,N_t,
$$
where $l$ is the Newton iteration index.  The Newton iterations  are repeated until 
$$
\| \Delta{\bf u}_l  \| < \epsilon_N,
$$
where $\epsilon_N$ is a tolerance for the Newton method.  For all numerical experiments considered, the Parareal and Newton tolerances are computed as follows: $\epsilon_P = C_{\rm sf}\epsilon_N$, where $C_{\rm sf}$ is a safety factor such that $C_{\rm sf} \ll 1$.

\subsection{Speedup Analysis}
\label{sec:speedup}
The parallel efficiency of the ParaDIn-Parareal method presented above strongly  depends on the number of Parareal iterations required for convergence.  Unlike the conventional Parareal method,  the computational cost of the coarse-grid correction and the fine-grid solve are nearly the same for the proposed method, because these steps are implemented using the ParaDIn algorithm for which  each time level is calculated on its own computing core in parallel.  So,  computational costs of the coarse- ($W^c_{\rm par}$) and fine-grid ($W^f_{\rm par}$) solves can be evaluated as follows:
\begin{equation}
\label{work}
\begin{array}{l}
W^c_{\rm par} =  k_N \left[ O(N_s) +  O(J^2 N_s) + W_{\rm sol}  + W_{\rm com}\right], \\
W^f_{\rm par} =  k_N \left[ O(N_s) +  O(M^2 N_s) + W_{\rm sol}  + W_{\rm com}\right],
\end{array}
\end{equation}
where $M$ is a number of blocks (i.e.,  a number of points of the coarse grid) and $J$ is a number of time levels within each block
(i.e.,  a number of points of the fine grid in each block), 
$W_{\rm sol}$ is a computational cost of solving a linear system of equations at each time level,  $N_s$ is a number of spatial degrees of freedom, $W_{\rm com}$ is a communication time between computing cores, and $k_N$ is a Newton iteration index of the parallel-in-time method.  
{In Eq.~(\ref{work}), it is assumed that 
\begin{equation}
\label{cstr}
\begin{array}{ll}
M &\ll O(N_s^{1/2}) \\
J  & \ll O(N_s^{1/2}), 
\end{array}
\end{equation}
so that all the conditions of Theorem 2 in \cite{YP}  hold.} 
Note that the first and second terms in the square brackets in Eq.~(\ref{work}) represent the cost of computing the Jacobian matrix and the left- and right-hand sides of the vector equation at a given time level on each computing core.  As we discuss in Section \ref{GIT},  the overall computational time also includes the cost of computing the initial guess for all time levels, which  is $c_f^p W_{\rm sol}$, where $c_f$ is a space-time coarsening factor, and $p\ge 3$ for 2-D problems.  Combining these computational costs together and assuming that the sequential and parallel algorithms converge in the same number of Newton iterations $k_N$, the speedup $S = {W_{\rm seq}}/{W_{\rm par}}$ that can be obtained with the  ParaDIn--Parareal method is given by
\begin{equation}
\label{eq:Sest}
S = \frac{k_N  N_t  \left[O(N_s) + W_{\rm sol}\right] }{\frac{k_{N} N_{\rm t} W_{\rm sol}}{c_f^{\rm p}}  + k_{N} \left[ O(N_s) +  O((M^2+J^2) N_s) + (2 k_P+1)W_{\rm sol}  + W_{\rm com}\right]},
\end{equation}
where $k_P$ is the number of Parareal iterations.
The factor $(2 k_P +1)$ in the denominator of Eq.~(\ref{eq:Sest}) is due to the fact that one coarse-grid solve and one fine-grid solve are needed per each Parareal iteration.  One more solve is required to initialize the solution on the coarse temporal grid.
Assuming that  $M$ and $J$ satisfy the constraints given by Eq. ~(\ref{cstr}),  we have $W_{\rm sol} \gg O((M^2+J^2) N_s) \gg O(N_s)$ and $W_{\rm sol} \gg W_{\rm com}$.  
Thus,  the speedup provided by the proposed ParaDIn--Parareal method can be estimated as follows:
\begin{equation}
\label{eq:PPspeedup}
S = \frac{N_t}{\frac{N_t}{c_f^p}  + 2 k_P + 1} \approx \frac{N_t}{2 k_P+1}.
\end{equation}
As follows from the above estimate,  the speedup obtained with the ParaDIn--Parareal method strongly depends on the number of Parareal iterations $k_P$ required for convergence.  For example,  if the Parareal method converges in 2 iterations,  i.e., $k_P = 2$ and the coarsening factor  is $c_f >2$, then 
the expected speedup would be $\frac{1}{5}$-th of the total number of computing cores (i.e.,  the total number of time steps $N_t$) used.

\section{ParaDIn--Parareal method with spatial coarsening}
\label{sec:PP_coarse}
As evident from Eq.(\ref{eq:Sest}),  the speedup of the ParaDIn--Parareal method can be increased by about a factor of 2, if we make the computational cost of the initialization and coarse grid correction steps much smaller than that of the fine--grid propagator step.  Note that for the ParaDIn-Parareal method, the temporal coarsening alone is not enough to construct a computationally cheap coarse-grid propagator, because each time level is computed on its own computing core in parallel.   A possible approach to achieve this goal is to use space-time multigrid methods \cite{GN}.  In the present analysis, we use an alternative approach based on the Parareal method with spatial coarsening \cite{Rup, AGR}.  
For the ParaDIn--Parareal method presented in Section \ref{sec:PP},  the coarse- and fine-grid propagators are based on the same spatial grid.  
The main idea of the spatial coarsening approach is to coarsen the spatial grid of the coarse-grid propagator by a factor of $c_s$ , which reduces the computational cost of the initialization and coarse grid correction steps by about a factor of $c_s^d$, where $d$ is the number of spatial  dimensions. 
It should be noted that this spatial coarsening does not affect the overall accuracy,  because the solution of Eq.~(\ref{eq:GlobalNewton}) is obtained during the fine-grid propagator step.

In the ParaDIn--Parareal method with spatial coarsening,  the initialization and the coarse grid correction steps are nearly identical to those given by Eqs. (\ref{eq:ParaDin_init}) and (\ref{eq:CGC}), respectively.  The only difference between the original and spatial coarsening approaches is that the Jacobian matrices and the right-hand sides are defined on the coarse rather than the original spatial grid, i.e.,
\begin{itemize}
 \item[] $A^c_m        = I^c + \Delta t \frac{\partial{\bf F}}{\partial {\bf u}}\left(\mathcal{R}{\bf u}^{Jm}_l\right)$ 
 \item[] $\tilde{\bf r}^c_m = {\bf r}^c\left(\mathcal{R}{\bf u}_l^{Jm}\right) + \mathcal{R}\Delta{\bf u}^{J (m-1)}_{k-1} - (\Delta{\bf u}^{m-1}_c)_{k-1},  \quad {\rm for} \ m = 1,\dots, M$, 
\end{itemize}
where  $\mathcal{R}$ is a restriction operator that transfers the solution from the fine to the coarse grid,  ${\bf u}_l^{Jm}$ is the fine-grid solution from the previous Newton iteration, $I^c$ is an $N^c \times N^c$ identity matrix,   $N^c = \frac{N_s}{c_s}$,  $c_s$ is a spatial coarsening factor, and $J=\frac{N_t}M$.  In the present method, the restriction operator $\mathcal{R}$  is a simple injection that assigns each coarse grid node a value at the corresponding fine grid node.

The fine-grid propagator of the ParaDIn--Parareal method with spatial coarsening is given by
\begin{equation}
\label{eq:fine_coarsening}
\left\{
\begin{array}{l}
\left[
\begin{array}{l}
A_1  \Delta{\bf u}^1_k        =  {\bf r}_1 \\
\vdots  \\
 \prod\limits_{i=1}^{J} A_i \Delta{\bf u}^{J}_k =   \sum\limits_{j=2}^{J}  \prod\limits_{i=1}^{j-1} A_i {\bf r}_j  + {\bf r}_1 \\
\end{array}
\right] \ {\rm Block} \ 1 \\
\left[
\begin{array}{ll}
A_{J+1} \Delta{\bf u}^{J+1}_k        =  {\bf r}_{J+1}  + \mathcal{I}\left(\Delta{\bf u}^{1}_c\right)_{k-1} \\
\vdots  \\
 \prod\limits_{i=J+1}^{2J} A_i \Delta{\bf u}^{2J}_k =   \sum\limits_{j=J+2}^{2J}  \prod\limits_{i=J+1}^{j-1} A_i {\bf r}_j  + {\bf r}_{J+1} \\
\end{array}
\right] \ {\rm Block} \ 2 \\
\centerline{ \vdots}  \\
\left[
\begin{array}{ll}
A_{(M-1)J+1}  \Delta{\bf u}^{(M-1)J+1}_k        =  {\bf r}_{(M-1)J+1} +  \mathcal{I}\left(\Delta{\bf u}_c^{M-1}\right)_{k-1} \\
\vdots & \\
 \prod\limits_{i=(M-1)J+1}^{M J} A_i \Delta{\bf u}^{M J}_k =   \sum\limits_{j=(M-1)J+2}^{M J}  \prod\limits_{i=(M-1)J+1}^{j-1} A_i {\bf r}_j  + {\bf r}_{(M-1)J+1} \\
\end{array}
\right] \ {\rm Block} \ M, 
\end{array}
\right.
\end{equation}
where $\mathcal{I}$ is an interpolation operator that transfers the solution from the coarse to the fine grid,  $A_i = I + \tau \left(\frac{\partial{\bf F}}{\partial {\bf u}}\right)^i$ is an $N_s\times N_s$ Jacobian matrix, and $k$ is a Parareal iteration index.  In the present method,  the coarse-grid solution is interpolated to the fine grid by using a 1-D cubic spline in a direction-by-direction manner.
 
The speedup of the ParaDIn-Parareal method with spatial coarsening can be evaluated in the same way described in Section \ref{sec:speedup}, thus leading to
\begin{equation}
\label{eq:Sest1}
S =
\frac{k_N  N_t  \left[O(N_s) + W_{\rm sol}\right] }{\frac{k_{N} N_{\rm t} }{c_f^{\rm p}}W_{\rm sol}  + k_{N} \left[ O(N_s) +  O((M^2+J^2) N_s) + \frac{(k_P+1)}{c_s^d}W_{\rm sol} + k_P W_{\rm sol}  + W_{\rm com}\right]},
\end{equation}
where $c_s$ is a spatial coarsening factor and $d$ is the number of spatial dimensions. Note that the $(k_P +1)W_{\rm sol}$ term in the denominator is scaled by $\frac1{c_s^d}$,  because the spatial coarsening is used only at the initialization and coarse grid correction steps.  Under the same assumptions outlined in Section \ref{sec:speedup}, the speedup can be evaluated as follows:
\begin{equation}
\label{eq:PPspeedup1}
S = \frac{N_t}{\frac{N_t}{c_f^p}  + \frac{k_P + 1}{c_s^d} +  k_P} \approx \frac{N_t}{k_P}.
\end{equation}
As expected,  the speedup provided by the ParaDIn-Parareal method with spatial coarsening is approximately as twice as large as the speedup of the baseline method.  It should be noted, however, that for convection-dominated problems,  the Parareal method with spatial coarsening may diverge or demonstrate significantly slower convergence rate than that of the baseline method without coarsening \cite{AGR}.  Our numerical results obtained using the ParaDIn--Parareal method with spatial coarsening, which are presented in Section \ref{results},  demonstrate the convergence behavior consistent with the theoretical analysis presented in \cite{AGR}.

\section{Numerical results}
\label{results}

 We now present numerical results obtained with both variants of the ParaDIn-Parareal method, namely,  with and without spatial coarsening for the 2-D nonlinear heat and Burgers equations.  To demonstrate the parallel efficiency of the proposed methods, we also compare them with the Parareal method on the same benchmark problems.  Note that the classical Parareal method diverges for these nonlinear problems.  Therefore, we use the Parareal method to solve the all-at-once system of linear equations (\ref{eq:GlobalNewton}) at each global Newton iteration.

The following two model problems considered in the present study are the 2-D nonlinear heat and Burgers equations. For the nonlinear heat equation, $f = 0$,  $g = 0$  $\forall (x,y) \in \Omega,  t\in (0, T_f]$ and $\mu(u) = \mu_0 u^2$, where $\mu_0$ is a positive constant.  In a 2-D case, the Burgers equations are a system of two coupled equations which are modified such that both velocity components are assumed to be equal each other in the entire space-time domain.  As a result, the system of two equations reduces to Eq. (\ref{eq:CLE}) with the following inviscid fluxes:  $f = g = \frac{u^2}2$.  For the Burgers equation, the viscosity coefficient $\mu=\mu_0$ is assumed to be a positive constant. 
For all numerical experiments presented herein, we set $N_x=N_y$ and use a uniform grid in time,  i.e., $\tau_1 = \tau_2 = \dots = \tau_{N_t }$. 
The system of linear equations at each time level is solved by using a standard direct solver for banded matrices without pivoting.  To make a fair comparison between the corresponding sequential and parallel-in-time schemes, the same direct solver for banded matrices is used for all methods considered.  
The ParaDIn-Parareal scheme has been parallelized only in time, such that the number of computing cores is precisely equal to the total number of time steps $N_t$, i.e., each time step is calculated on one core.
 Also,  all $L_1$,  $L_2$, and $L_{\infty}$ error norms presented in this section are evaluated over the entire space-time domain.  
 For all test problems considered, we run the Parareal, ParaDIn-Parareal, and sequential BDF1 schemes with identical input parameters and  on the same space-time grids and the same computing cores.

\subsection{2-D nonlinear heat equation}
\label{nheat}
To evaluate the parallel performance of the ParaDIn-Parareal method for problems with smooth solutions, we solve the 2-D nonlinear heat equation given by Eq.~(\ref{eq:CLE}) with  $f(u) = g(u)= 0,   \forall (x,y)\in[0.1,1.1]\times[0.1,1.1],  \ t\in[0,1]$ The viscosity coefficient in Eq. (1) is given by $\mu = \mu_0 u^2$, where $\mu_0$ is set equal to $10^{-6}$. This nonlinear heat equation has the following exact solution:
$$
u_{\rm ex}(x,t) = \sqrt{\sqrt{\frac \alpha{\mu_0}}(x + y) + \alpha t  +1},
$$
where $\alpha$ is a positive constant that is set to be $1.0$ for all test cases considered in this section.  The above exact solution is used to define the initial and boundary conditions given by Eq. (\ref{eq:BC}).  An initial guess for the all-at-once Newton method is the solution of this parabolic equation obtained by the corresponding sequential BDF1 scheme on a grid coarsened by a factor of $c_f = 4$ both in each spatial direction and time, which is then  interpolated to the original grid by using the 1-D cubic spline in the direction-by-direction manner.

First, we verify that the proposed ParaDIn-Parareal BDF1 scheme provides the same accuracy as the original sequential counterpart for this test problem.  As evident from Table \ref{conv_heat}, the $L_2$ discretization errors of the ParaDIn-Parareal BDF1 schemes and its sequential counterpart are  identical to each other for all grids considered.  Note that for this test problem, the discretization error is dominated by
its spatial component, so that the BDF1 scheme demonstrates the convergence rate that approaches to 2 as the grid is refined.
\begin{table}[!h]
\begin{center}
\begin{tabular}{ccccc}
\hline
    $N_t\times N_x\times N_y$    &  $M_c$  & Sequential BDF1 &   ParaDIn-Parareal   &  $L_2$ rate\\
                                                      &                & $L_2$ error         &  $L_2$ error             &                     \\
\hline
  $30\times       4\times 4$   &  1  &    $2.40\times 10^{-5}$  &   $2.40\times 10^{-5}$ &  $-$  \\
  $60\times       8\times 8$   &  2  &   $8.50\times 10^{-6}$  &   $8.50\times 10^{-6}$&  $1.50$    \\
  $120\times  16\times 16$   &  4  &    $2.65\times 10^{-6}$  &   $2.65\times 10^{-6}$&  $1.68$  \\
  $240\times 32\times 32$   &   8  &   $7.35\times 10^{-7}$  &   $7.35\times 10^{-7}$&  $1.85$  \\
  $480\times 64\times 64$   &  16 &   $1.88\times 10^{-7}$  &   $1.88\times 10^{-7}$& $1.97$  \\
\hline
\end{tabular}
\end{center}
\caption{\label{conv_heat} $L_2$ errors obtained with the sequential and ParaDIn-Parareal BDF1 schemes for the 2-D nonlinear heat equation on uniformly refined grids.
}
\end{table}
\begin{table}[!h]
\begin{center}
\begin{tabular}{ccccc}
\hline
Number     &  $M_c$  & Sequential BDF1 &   ParaDIn-Parareal  &  Speedup   \\
of cores     &               & runtime (s)           &  runtime  (s)            &                    \\
\hline
  30   &  1  &   3813 .9  &  377.5      &  10.1    \\
  60   &  2  &   8165.8  &  746.2     &  10.9    \\
 120  &  4  &   15119.4  &  707.1      &  21.4    \\
 240 &   8  &  27215.2  &  648.3    & 42.0    \\
 480 &  16 &  81238.2  &  926.9     & 87.6    \\
\hline
\end{tabular}
\end{center}
\caption{\label{speedup_heat} The speedup obtained with the ParaDIn-Parareal BDF1 scheme for the 2-D nonlinear heat equation on 
grids with $N_t = 30,  60,  120, 240, 480$,  and $N_x = N_y = 64$.
}
\label{speedup_heat}
\end{table}

The speedup obtained with the ParaDIn-Parareal BDF1 scheme as compared with the corresponding sequential counterpart for the 2-D nonlinear heat equation is presented in Table \ref{speedup_heat}.  
For this 2-D test problem with the smooth solution,  two Parareal iterations, $k_P$,  are enough to drive the residual below than $10^{-9}$ at each Newton iteration.  Both the ParaDIn-Parareal method and the corresponding sequential scheme converge quadratically in two Newton iterations and reduce the residual below than $10^{-8}$ at each time level.  
As evident from Table \ref{speedup_heat},  the speedup provided by the ParaDIn-Parareal method increases as the number of time steps and consequently the number of processors increases and reaches its maximum value of about $88$ on 480 computing cores,   which is consistent with our estimate given by Eq. (\ref{eq:PPspeedup}).  Note that the maximum speedup that can be obtained with the  baseline Parareal method regardless of the number of processors used is only $4.3$ for the same test problem on the $N_t=480$ grid.

As discussed in Section \ref{sec:PP_coarse},  the parallel efficiency of the ParaDIn-Parareal method can be further improved if the coarse-grid correction and initialization steps are performed on a coarse spatial grid rather than of the original grid. To demonstrate this property, we solve the same 2-D nonlinear heat equation using the ParaDIn-Parareal method whose coarse-grid correction step is performed on a spatial grid that is coarsened by a factor of 2 in each spatial dimension.  Table \ref{speedup_heat_coarse} shows that for this test problem, the spatial coarsening allows us to increase the speedup to 124  on 480 computing cores. 
\begin{table}[!h]
\begin{center}
\begin{tabular}{ccccc}
\hline
Number     &  $M_c$  & Sequential BDF1 &   ParaDIn-Parareal  &  Speedup   \\
of cores     &               & runtime (s)           &  runtime  (s)            &                    \\
\hline
  30   &  1  &   3834 .9  &  286.8    &  13.5    \\
  60   &  2  &   9229.3  &  619.5     &  14.9    \\
 120  &  4  &   15069.7  &  524.5    &  28.7    \\
 240 &   8  &  32679.8  &  559.7    & 58.4    \\
 480 &  16 &  81044.8  &  652.6     & 124.2    \\
\hline
\end{tabular}
\end{center}
\caption{\label{speedup_heat_coarse} The speedup obtained with the ParaDIn-Parareal BDF1 scheme with spatial coarsening for the 2-D nonlinear heat equation on  grids with $N_t = 30,  60,  120, 240, 480$,  and $N_x = N_y = 64$.
}
\label{speedup_heat_coarse}
\end{table}

Though the spatial grid coarsening at the coarse grid correction step does not increase the discretization error of the overall ParaDIn-Parareal BDF1 scheme,  it may slow down the convergence.  Note, however, that this is not the case for this test problem and the ParaDIn-Parareal method with spatial coarsening also converges in two Newton iterations.  It should also be noted that the $L_2$ errors obtained with the sequential scheme and the ParaDIn-Parareal method with coarsening are identical to each other for all grids considered.
Form these results, we can conclude that the ParaDIn-Parareal BDF1 scheme with spatial coarsening allows us to reduce the computational time by up two orders of magnitude as compared with the sequential counterpart without sacrificing the solution accuracy.

\subsection{2-D viscous Burgers equation}
The second test problem is  a viscous shock described by the 2-D viscous Burgers equation given by Eq. ~(\ref{eq:CLE}) with  $f(u) = g(u)=u^2/2.$ For this problem,  the exact solution is given analytically in the following form:
\begin{equation}
\label{eq:Burgers}
u_{\rm ex} = \frac v2 \left[ 1 - \tanh\left( \frac{v(x+y-vt)}{4\mu}\right) \right],
\end{equation}
where $v$ is a shock speed that is set to be $0.5$. The viscosity coefficient $\mu$ is set equal to $10^{-3}$, so that the viscous shock wave given by Eq. (\ref{eq:Burgers}) is not fully resolved on any grid considered.  An initial guess for the ParaDIn-Parareal method is constructed in the same way as in the previous test problem  with the coarsening factor of $c_f=3$ in each spatial direction and time.
\begin{table}[!h]
\begin{center}
\begin{tabular}{ccccc}
\hline
    $N_t\times N_x\times N_y$    &  $M_c$  & Sequential BDF1 &   ParaDIn-Parareal      & $L_1$ rate \\
                                                      &                & $L_1$ error         &  $L_1$ error                 &\\
\hline
  $30\times   7\times 7$        &   3   &    $3.00\times 10^{-2}$  &   $3.00\times 10^{-2}$      &   $-$     \\
  $60\times   11\times  11$    &  6    &   $2.12\times 10^{-2}$  &   $2.12\times 10^{-2}$    & 0.50    \\
  $120\times  21\times 21$   &   10  &    $1.33\times 10^{-2}$  &   $1.33\times 10^{-2}$ & 0.67 \\
  $240\times 41\times 41$   &   16  &   $5.96\times 10^{-3}$  &   $5.96\times 10^{-3}$  & 1.16 \\
  $480\times 81\times 81$   &  24 &   $2.55\times 10^{-3}$  &   $2.55\times 10^{-3}$    & 1.22 \\
\hline
\end{tabular}
\end{center}
\caption{\label{grid_ref_Burgers} $L_1$ errors obtained with the sequential and ParaDIn-Parareal BDF1 schemes for the 2-D Burgers equation on 
uniformly refined grids.
}
\end{table}

\begin{table}[!h]
\begin{center}
\begin{tabular}{ccccc}
\hline
Number     &  $M_c$  & Sequential BDF1 &   ParaDIn-Parareal  &  Speedup   \\
of cores     &               & runtime (s)           &  runtime  (s)            &                    \\
\hline
  60   &  2  &   22858.7  &  1365.7      &  16.7    \\
 120  &  4  &   41079.0   &   1379.6     &  29.8   \\
 240 &   8  &  60678.4   &  1436.5     &  42.2    \\
 480 &  16 &  129843.3 &  1645.4     &  78.9    \\
\hline
\end{tabular}
\end{center}
\caption{\label{speedup_Burgers} The speedup obtained with the ParaDIn-Parareal BDF1 scheme for the 2-D Burgers equation on  grids with $N_t = 60,  120, 240, 480$,  and $N_x = N_y = 81$.
}
\end{table}

As in the previous test case, we compare discretization errors obtained with the sequential BDF1 scheme and its ParaDIn-Parareal counterpart for this 2-D Burgers equation on a sequence of globally refined uniform grids, which is shown in Table \ref{grid_ref_Burgers}.  The $L_1$ discretization errors of the ParaDIn-Parareal method are identical to those computed using the sequential BDF1 method on the corresponding grids and demonstrate the design order of convergence. To evaluate the performance of the proposed ParaDIn-Parareal method for problems with strong shock waves,  we solve the 2-D Burgers equation  by using the sequential and ParaDIn-Parareal schemes on the same set of grids. 
Unlike the previous test problem, both the sequential and parallel-in-time BDF1 schemes do not converge quadratically for this test problem with the discontinuous solution.  On average,  three Newton iterations are needed to drive the residual below $10^{-3}$ which is significantly lower than the corresponding discretization error.
As follows form the numerical results that are presented in Table \ref{speedup_Burgers},  the new ParaDIn-Parareal method provides the speedup of  up to nearly 79 as compared with the sequential BDF1 scheme for this highly nonlinear problem with the strong shock wave.
Similar to the previous test problem,  the highest speedup that can be obtained by the baseline Parareal method for this Burgers equation on the $N_t=480$ grid does not exceed $3.2$ regardless of the number of computing cores used.  Based on these results, we can conclude that the proposed ParaDIn-Parareal method drastically outperforms the Parareal method in terms of parallel efficiency for this nonlinear problem with the  discontinuous solution.

All numerical results presented for the 2-D Burgers equations in this section are obtained without using spatial coarsening.  Unlike the parabolic equations, the Parareal method with spatial coarsening does not converge for this Burgers equation,  
which is consistent with the analysis presented in \cite{AGR} for convection-dominated problems.

\section{Conclusions}
In this paper, we have combined the parallel-in-time direct inverse (ParaDIn) method introduced in \cite{YP}  with the classical Parareal method \cite {LMT} and used it for solving highly nonlinear problems discretized by the implicit first-order backward difference (BDF1) scheme in time and the conventional 2nd-order central finite difference scheme in space. 
To enhance the parallel performance of the ParaDIn method and circumvent the key constraint of this methodology caused by large condition numbers of the product matrices on the left-hand side of the all-at-once system at each Newton iteration,   a block-Jacobi preconditioner is used.  Since the block-Jacobi method alone practically provides no appreciable speedup \cite{DMD},  we accelerate the convergence of Jacobi iterations by using the Parareal method \cite{LMT},  which can be interpreted as a two-level multigrid method in time. In the present study,  the Parareal method is used to solve the all-at-once system of linear equations at each global Newton iteration.  To further improve the performance of the combined ParaDIn-Parareal method,  we implement the Parareal coarse-grid correction step in a fully parallel-in-time manner by using the ParaDIn method.  As a result, both the coarse- and fine-grid correction steps of the Parareal method are performed in parallel, which allows us to solve each time level on its own computing core in parallel regardless of 
the number of degrees of freedom used for discretization of the spatial derivatives. 
Our numerical results show that the proposed ParaDIn-Parareal BDF1 scheme provides the speedup of up to 124 and 79 on 480 computing cores for the 2-D nonlinear heat and  Burgers equations,  respectively.

\bigskip
\noindent{\large\bf Acknowledgments}


\noindent
The second author gratefully acknowledges the support from Department of Defense through the grant W911NF-23-10183.






\end{document}